\theoremstyle{plain}
\newtheorem{lemma}{Lemma}[section]
\newtheorem{proposition}[lemma]{Proposition}
\newtheorem{theorem}[lemma]{Theorem}
\newtheorem{corollary}[lemma]{Corollary}
\newtheorem*{subclaim*}{Subclaim}
\theoremstyle{remark}
\newtheorem{remark}[lemma]{Remark}
\newtheorem*{remark*}{Remark}
\newtheorem*{notation*}{Notation}
\newtheorem{question}[lemma]{Question}
\newtheorem*{question*}{Question}
\theoremstyle{definition}
\newtheorem*{definition*}{Definition}
\newtheorem*{remarks*}{Remarks}
\let\strokeL\L
\newcommand*{\tcap}{\mathbin{\scalebox{1.5}{\ensuremath{\cap}}}}%
\def\N{\mathbb{N}}
\def\Z{\mathbb{Z}}
\def\Q{\mathbb{Q}}
\def\F{\mathbb{F}}
\def\pp{\mathbf{p}}
\def\t{\mathbf{t}}
\def\L{\mathcal{L}}
\def\U{\mathcal{U}}
\def\E{\mathcal{E}}
\def\FL{\textbf{F}}
\def\X{\textbf{X}}
\def\Y{\textbf{Y}}
\def\0{\mathbf{0}}
\def\a{\mathfrak{a}}
\def\m{\mathfrak{m}}
\def\n{\mathfrak{n}}
\def\q{\mathfrak{q}}
\def\p{\mathfrak{p}}
\def\UG{\mathfrak{UGroup}}
\def\pLie{\mathfrak{pLie}}
\def\La{\mathfrak{L}}
\DeclareMathOperator{\charac}{char}
\DeclareMathOperator{\GL}{GL}
\DeclareMathOperator{\Mat}{M}
\DeclareMathOperator{\kdim}{dim_{Krull}}
\DeclareMathOperator{\Kdim}{dim_{Krull}}
\title{Linearity of compact $R$-analytic groups}
\author[M. Casals-Ruiz]{Montserrat Casals-Ruiz}
\address{Ikerbasque - Basque Foundation for Science and Department of Mathematics, University of the Basque Country (UPV/EHU), Bilbo 48080, Spain}
\email{montserrat.casals@ehu.eus}
\author[A. Zozaya]{Andoni Zozaya}
\address{Department of Mathematics, University of the Basque Country (UPV/EHU), Bilbo 48080, Spain}
\email{andoni.zozaya@ehu.eus}
\date{}
\begin{document}

\begin{abstract}
We prove that any compact $R$-analytic group is linear when $R$ is a pro-$p$ domain of characteristic zero. 
\end{abstract}

\thanks{This work has been supported by the Spanish Government, grants PID2020-117281GB-I00 and  PID2019-107444GA-I00, partly by the European Regional Development Fund (ERDF), and the Basque Government, grant IT1483-22. The second author is supported as well by Spanish Ministry of Science, Innovation and Universities' grant FPU17/04822.}

\keywords{profinite groups, \space $R$-analytic groups, \space linear groups.\newline
MSC (2020):  \textit{Primary}: 20E18 \space \textit{Secondary}:  03C15 \space 20H25
}

\maketitle

\section{Introduction}

The study of $p$-adic analytic pro-$p$ groups has given rise to a prolific mathematical subject since Lazard \cite{La68} introduced them in 1968, see \cite{DDMS} and the references therein. At the core of its development lies their rich algebraic structure and the myriad of properties they satisfy: they have polynomial subgroup growth, have finite rank, they are isomorphic to a closed subgroup of a Sylow pro-$p$ subgroup of $\GL_n(\Z_p),$ etc.

The theory of analytic profinite groups developed further by considering manifolds over more general rings. For instance, in \cite[Chapter III]{Bou} and \cite[Part II]{Serre} analytic groups over local principal ideal domains are studied. More generally, let $R$ be a pro-$p$ domain, namely a local Noetherian integral domain, with maximal ideal say $
\m,$ that is complete with respect to the $\m$-adic topology and whose residue field $R/\m$ is finite of characteristic $p$ (e.g. the power series rings $\Z_p[[t_1,\dots, t_m]]$ and $\F_p[[t_1, \dots, t_m]],$ where $\Z_p$ and $\F_p$ stand for the ring of $p$-adic integers and the finite field of $p$ elements respectively). In \cite[Chapter 13]{DDMS}, the authors introduce and delve into the study of $R$-analytic groups for a pro-$p$ domain $R.$ These comprise a topological group equipped with an analytic manifold structure over $R$ such that both structures are compatible in the sense that the multiplication and the inversion are analytic maps (see \cite[Definition 13.3]{DDMS}). This notion naturally generalizes the concept of $p$-adic analytic group, where $R$ is taken to be $\Z_p.$

Interest in more general families analytic groups arose when it was realised that they play a r\^{o}le in combinatorial group theory or problems concerning group words. For example, Lubotzky and Shalev \cite{LS94} studied $R$-perfect groups --a special class of $R$-analytic groups-- proving that their subgroup growth is close to polynomial and that they satisfy Golod-Shafarevich's inequality. Subsequently, Jaikin-Zapirain and Klopsch \cite{JaKl} continued the investigation of the structure of analytic groups, mainly for rings of positive characteristic, and among others, proved that finitely generated $\F_p[[t]]$-analytic groups are verbally elliptic. Furthermore, Bradford \cite{Bra18} employed analytic groups to obtain polylogarithmic upper bounds for the diameters of some finite simple groups of Lie type.  

The linearity of compact $p$-adic analytic groups is well-established \cite{La68}, but it is a long-standing question whether compact $R$-analytic groups are linear.  This question was posed two decades ago by Lubotzky and Shalev (see \cite[Question 2, p. 311]{LS94}), and the same remark has been made in \cite[Question 5, p. 124]{DuSa}.

There are several partial results addressing the aforementioned problem, and the question has positive answer for just-infinite $R$-analytic groups (Jaikin-Zapirain, \cite{Ja02}) and  perfect $\Z_p[[t]]$-standard groups (Camina and Du Sautoy, \cite{CD}); refer to Subsection \ref{standard} for the definition of standard group. In a broader scope, \cite{Ja06} proves that when $R$ is a pro-$p$ domain of characteristic zero, finitely generated $R$-analytic groups are linear.

It should be noted that perfect $R$-analytic groups are finitely generated, and, consequently, the majority of the known cases fall into the class of finitely generated profinite groups. Nonetheless, within the setting of $R$-analytic groups, this condition is quite constraining, in view that every (non-discrete) finitely generated compact $R$-analytic group that satisfies a group identity is in fact $p$-adic analytic, see \cite[Theorem 1.3]{JaKl}.

In this paper, we give a full answer to this question for pro-$p$ domains of characteristic zero. More precisely, we prove the following:

\begin{theorem}\label{main}
Let $R$ be a pro-$p$ domain of characteristic zero. Then every compact $R$-analytic group is linear.
\end{theorem}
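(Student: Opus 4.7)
The plan is to reduce the linearity of $G$ to the construction of an integral faithful representation of its associated $R$-Lie algebra, and then recover the group embedding by exponentiation.

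By the structure theory of compact $R$-analytic groups \cite[Chapter 13]{DDMS}, $G$ contains an open normal uniform pro-$p$ subgroup $U$ of finite index. Since linearity is preserved under finite extensions---for example, by combining a faithful representation of the finite quotient $G/\mathrm{Core}_G(U)$ with the representation induced from $U$---it suffices to linearise $U$. To $U$ we associate its $R$-Lie algebra $\L=\L(U)$, which is a free $R$-module of rank $\dim_R U$.

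Because $R$ has characteristic zero, on a sufficiently deep $\m$-adic neighbourhood of zero in $\L$ the Baker--Campbell--Hausdorff series converges and recovers the group operation on $U$ from the Lie bracket. Consequently, any faithful $R$-linear Lie algebra homomorphism $\L \hookrightarrow \Mat_n(S)$ into the matrix algebra over some pro-$p$ domain $S$ of characteristic zero extending $R$ will exponentiate to a faithful continuous group homomorphism of an open subgroup of $U$ into $\GL_n(S)$, yielding a linear representation of $U$ after one further passage to a finite-index subgroup.

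The main step---and the principal obstacle---is then the construction of the Lie algebra embedding $\L \hookrightarrow \Mat_n(S)$. Over the fraction field $K=\mathrm{Frac}(R)$, the Levi decomposition $\L \otimes_R K = \mathfrak{s} \oplus \mathfrak{r}$ together with Ado's theorem (both available because $\mathrm{char}\,K = 0$) produces a faithful finite-dimensional $K$-representation of $\L\otimes_R K$. The delicate task is to descend this $K$-representation to coefficients in a pro-$p$ domain $S \supseteq R$---possibly a finite integral extension of $R$---while both preserving the Lie bracket integrally on an $R$-lattice of $\L$ and ensuring that the BCH exponentiation takes values in $\GL_n(S)$ rather than merely $\GL_n(K)$. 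This integrality constraint, intrinsic to the pro-$p$ topology of the group, is what separates the general compact case from the finitely generated case already handled in \cite{Ja06}, and it is where I expect the main technical work of the proof to lie.
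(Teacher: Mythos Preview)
Your proposal is a plan rather than a proof: you have correctly isolated the crux as an integral Ado-type statement for $R$-Lie lattices, but you do not indicate how to carry out the descent from $K=\mathrm{Frac}(R)$ to a pro-$p$ domain $S\supseteq R$, and this is a genuine obstacle. Ado over $K$ gives you a faithful $K$-representation, but there is no general mechanism to clear denominators so that the image of an $R$-lattice in $\L$ lands inside $\Mat_n(S)$ for some pro-$p$ domain $S$, while simultaneously keeping the exponential in $\GL_n(S)$. Nothing in the standard toolkit (Levi decomposition, Ado, choice of Chevalley-type basis) produces this integrality for a general Noetherian complete local domain $R$, and you yourself flag this as ``where the main technical work lies'' without supplying it. As written, the argument stops exactly at the hard point.

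The paper's proof sidesteps this integral Ado problem entirely, via a route you may find instructive. Rather than seeking a single faithful representation over (an extension of) $R$, one shows that an $R$-standard group $G$ is \emph{discriminated} by $\GL_n(\Z_p)$ for a fixed $n$: for every finite subset $S\subset G$ there is a homomorphism $G\to\GL_n(\Z_p)$ injective on $S$. This is obtained by reducing $R$ modulo suitable prime ideals to land in pro-$p$ domains $Q$ of Krull dimension one (hence finite free over $\Z_p$), so that the quotient $Q$-standard group becomes $p$-adic analytic of dimension bounded independently of the chosen prime; the uniform bound on $n$ then comes from an Ado theorem over $\Z_p$ with controlled degree. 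Finally, discrimination by $\GL_n(\Z_p)$ with fixed $n$ yields, by a compactness/ultraproduct argument, an embedding $G\hookrightarrow \GL_n(\Z_p)^{\U}\cong \GL_n(\Z_p^{\U})$, which is linear. The point is that one never needs an integral representation over $R$ itself; one only needs uniformly bounded representations over $\Z_p$ of all the ``one-dimensional shadows'' of $G$.

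A minor terminological remark: what a compact $R$-analytic group contains as an open subgroup of finite index is an $R$-\emph{standard} group, not a uniform pro-$p$ group in the usual sense (the latter means $\Z_p$-standard). When $\Kdim R>1$ these are genuinely different objects, and your Lie-theoretic setup should be phrased for $R$-standard groups.
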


 Our proof has a model-theoretic flavour. The main observation is that any compact $R$-standard group $G,$ where $R$ is a pro-$p$ domain of characteristic $0$, is discriminated by $\GL_n(\Z_p)$ for a suitable $n,$ see Corollary \ref{cor:disc}. This result relies on the linearity of uniform pro-$p$ groups, see Section \ref{subsec: uniform}. Recall that we say that $G$ is discriminated by $H$ (equivalently, $G$ is fully residually $H$) if, for any finite set $S$ of elements of $G$, there exists a group homomorphism $h_S: G \to H$ injective on $S$. Accordingly, from standard model-theoretic results, we deduce that $G$ is a subgroup of the ultrapower of $\GL_n(\Z_p)$ which is isomorphic to $\GL_n(\Z_p^\U)$ where $\Z_p^\U$ is the ultrapower of $\Z_p$ and, so in particular, it is linear, see Section \ref{subsec: model theory}.

%%%%%%%%%%%%%%%%%%%%%

\section{Proof of Theorem \ref{main}}

\subsection{Notation and conventions}

For a ring $Q$, we denote by $\charac{Q}$ and $\Kdim{Q}$ the characteristic and the Krull dimension of $Q$. Moreover, $Q[[X_1, \dots, X_n]],$ or simply $Q[[\X]],$ is the ring of formal power series with coefficients in $Q,$ that is, power series $\sum_{\alpha} a_\alpha \X^{\alpha},$ where $\alpha  = (\alpha_1, \dots, \alpha_n) \in \N_0^n,$ $a_{\alpha} \in Q$ and $\X^{\alpha}$ is short for $X_1^{\alpha_1}\dots X_n^{\alpha_n}.$

Throughout the paper $p$ will be a prime number and $R$ will be a pro-$p$ domain with maximal ideal $\m.$

For an ideal $\a$ both the $N$-th power ideal and the $N$-Cartesian power of $\a$ are used regularly in the paper. It is customary to use the notation $\a^N$ to refer to both concepts. In order to distinguish them, we keep the notation $\a^N$ for the $N$-Cartesian power of the ideal $\a,$ and we fix the notation $\a^{*N}$ to denote the $N$-th power ideal of $\a$.

%%%%%%%%%%%%%%%%

\subsection{Reduction to \texorpdfstring{$R$}{R}-standard groups} \label{standard} 

Any compact $R$-analytic group contains a so-called \emph{$R$-standard} group of finite index (see \cite[Theorem 13.20]{DDMS}), which has a strong algebraic structure. 
By virtue of the induced linear representation, the proof of the principal theorem is going to be reduced to this subgroup, so we next recall the definition. 

An $R$-analytic group $S$ of dimension $d$ is $R$-standard if there exist
\begin{itemize}
\item[(i)]  a homeomorphism $\phi \colon S \rightarrow \left( \m^{*N}\right)^{d},$ for some $N \in \N,$ such that $\phi(1) = \0$ and 
\item[(ii)] some formal power series $F_{j} \in R[[X_1, \dots, X_{2d}]],$ $j \in \{1, \dots, d\},$   such that 
$$\phi(x y) = \left(  F_1(\phi(x), \phi(y)), \dots, F_d(\phi(x), \phi(y))\right) \ \forall x, y \in S.$$
\end{itemize}
The integer $N$ in (i) is the \emph{level} of $S,$ and the tuple of power series $\FL =(F_1, \dots, F_d)$ in (ii) is called the \emph{formal group law} of $S$ with respect to $\phi$. Any $R$-standard group is a pro-$p$ group, and so compact (see \cite[Proposition 13.22]{DDMS}).

%%%%%%%%%%%%%%%

\subsection{Matrix representation of uniform pro-\texorpdfstring{$p$}{p} groups}

\label{subsec: uniform}

For compact $p$-adic analytic groups, $\Z_p$-standard groups are precisely uniform pro-$p$ groups, namely finitely generated torsion-free pro-$p$ groups $G$ such that $[G, G] \leq G^{\pp},$ where $\pp = p$ when $p$ is odd and $\pp = 4$ when $p =2$ ---we shall keep this definition of $\pp$ for the rest of the section---.

The linearity of uniform pro-$p$ groups is an essential stepping stone in our work. The proof appeals to Ado's Theorem in conjunction with the Baker-Campbell-Hausdorff formula in order to link a $p$-adic analytic group with its Lie algebra. More precisely:

\begin{theorem}[cf. \textup{\cite[Section 7.3]{DDMS}}]
\label{dim1linear}
Let $G$ be a uniform pro-$p$ group of dimension $d.$ There exists a group monomorphism $ m \colon G \hookrightarrow \GL_{n}(\Z_p)$ where $n \leq \gamma(p, d)$ for a function $\gamma \colon \N^2 \rightarrow \N.$   
\end{theorem}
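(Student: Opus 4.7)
The proof proposal follows the route indicated in the excerpt: pass from the group to its Lie algebra, embed the Lie algebra by Ado, then lift back via Baker--Campbell--Hausdorff, paying careful attention to integrality.

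\medskip

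\noindent\textbf{Step 1: Associate a $\Z_p$-Lie algebra to $G$.} Since $G$ is uniform, the standard Lazard correspondence endows the underlying set of $G$ with a structure of a powerful $\Z_p$-Lie algebra $L(G)$ of rank $d$, where the Lie bracket and addition are defined via iterated commutators and powers using the inverse BCH series. Extending scalars, $\La := L(G) \otimes_{\Z_p} \Q_p$ is a finite-dimensional Lie algebra over $\Q_p$ of dimension $d$.

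\medskip

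\noindent\textbf{Step 2: Embed $\La$ by Ado's theorem.} Ado's theorem (available in characteristic zero) furnishes a faithful representation $\rho \colon \La \hookrightarrow \mathfrak{gl}_n(\Q_p)$ for some $n$. The size $n$ can be controlled purely in terms of $d$; for instance, the standard construction through the universal enveloping algebra modulo a power of the nilpotent ideal yields a bound $n \leq f(d)$ for an explicit function $f$. This provides the first factor of the bound $\gamma(p,d)$.

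\medskip

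\noindent\textbf{Step 3: Rescale to achieve integrality.} The matrix Lie subalgebra $\rho(L(G)) \subseteq \mathfrak{gl}_n(\Q_p)$ sits inside $\mathfrak{gl}_n(\Q_p)$, but its entries need not lie in $\Z_p$. Choose $k \in \N$ so that $p^k \rho(L(G)) \subseteq p\,\mathfrak{gl}_n(\Z_p)$ (or $4\,\mathfrak{gl}_n(\Z_2)$ when $p=2$); this $k$ depends only on the denominators appearing in $\rho$ and on the structure constants of $L(G)$ on a $\Z_p$-basis, and can be bounded in terms of $p$ and $d$. After this rescaling, the image sits in the Lie algebra $\pp\,\mathfrak{gl}_n(\Z_p)$, on which the Baker--Campbell--Hausdorff series converges and defines a uniform pro-$p$ group structure, namely the principal congruence subgroup $\Gamma := \mathrm{Id} + \pp\,\mathfrak{gl}_n(\Z_p) \leq \GL_n(\Z_p)$.

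\medskip

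\noindent\textbf{Step 4: Transport back to a group monomorphism.} By the Lazard correspondence, a Lie homomorphism between powerful $\Z_p$-Lie algebras of uniform pro-$p$ groups lifts uniquely to a continuous group homomorphism of the underlying groups, with the lift given by applying the (truncated) BCH formula. Applied to $p^k \rho \colon L(G) \to \mathrm{Lie}(\Gamma)$, which is injective since $p^k$ is a nonzerodivisor and $\rho$ is faithful, this produces the desired monomorphism $m \colon G \hookrightarrow \Gamma \leq \GL_n(\Z_p)$. Setting $\gamma(p,d) := f(d)$ (the bound from Ado) completes the argument.

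\medskip

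\noindent\textbf{Main obstacle.} The conceptual core --- BCH plus Ado --- is classical, but the delicate point is the integrality in Step 3: Ado's embedding a priori lives over $\Q_p$, and one must bound the $p$-adic denominators appearing in $\rho$ by an explicit function of $p$ and $d$ in order both to fit the image into $\pp\,\mathfrak{gl}_n(\Z_p)$ and to guarantee the convergence of BCH on the rescaled algebra. This is exactly where the dependence of $\gamma$ on $p$ (and not only on $d$) enters, and why one cannot simply quote Ado as a black box.
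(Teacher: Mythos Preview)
Your Step 3/4 contains a genuine error: the map $x \mapsto p^{k}\rho(x)$ is \emph{not} a Lie algebra homomorphism for $k\ge 1$, since $[p^{k}\rho(x),p^{k}\rho(y)] = p^{2k}[\rho(x),\rho(y)] \ne p^{k}\rho([x,y])$. The Lazard correspondence therefore cannot be applied to $p^{k}\rho$, and Step~4 collapses. The natural repair is instead to restrict $\rho$ to the sublattice $p^{k}L(G)$ (this \emph{is} a Lie map into $\pp\,\Mat_n(\Z_p)$), obtain a group embedding of the finite-index subgroup $\E(p^{k}L(G))\le G$, and then induce. But the induced representation acquires a factor $|G:\E(p^{k}L(G))|=p^{kd}$, so you now need $k$ bounded in terms of $p$ and $d$ alone, uniformly over all uniform groups of dimension $d$. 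Your assertion to this effect in Step~3 is exactly the crux and is not justified: Ado over $\Q_p$ gives no control on the $p$-adic denominators of $\rho$ relative to an arbitrary $\Z_p$-lattice in $\La$, and you yourself flag this as the ``main obstacle'' without resolving it.

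The paper sidesteps both problems by invoking an \emph{integral} Ado theorem (Ado for principal ideal domains, \cite{A4}), which directly yields a faithful Lie embedding $\phi\colon \L(G)\to \Mat_{\ell}(\Z_p)$ with $\ell\le f(d)$; no rescaling of the map is needed. One then restricts $\phi$ to the powerful sublattice $\pp\L(G)$, which lands in the powerful lattice $\pp\,\Mat_{\ell}(\Z_p)$, applies the categorical inverse $\E$ to get a group embedding of $\E(\pp\L(G))$ into $\GL_{\ell}(\Z_p)$, and induces up the fixed index $|G:\E(\pp\L(G))|=\pp^{d}$. The dependence on $p$ thus enters only through this controlled factor, giving $\gamma(p,d)=\pp^{d}f(d)$. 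In short, the missing ingredient in your argument is precisely the integral version of Ado; once you have it, the rescaling gymnastics become unnecessary.
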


\begin{proof}
There exists a categorical isomorphism $\L \colon \UG \rightarrow \pLie$ between the categories $\UG$ of $d$-dimensional uniform pro-$p$ groups and the category $\pLie$ of $d$-dimensional powerful $\Z_p$-lattices, namely $\Z_p$-Lie algebras $\La$ that are free $\Z_p$-modules of rank $d$ and satisfy  $[\La, \La] \leq \pp \La$  (see \cite[Theorem 9.10]{DDMS}). Let $\mathcal{E}$ be the inverse of $\L.$ Then, $\E$ is defined by the Baker-Campbell-Hausdorff formula, and for the powerful matrix $\Z_p$-lattice  $\pp \Mat_\ell(\Z_p)$, $\E$ is nothing but the usual matrix exponentiation. In particular, 
$$\E(\pp \Mat_\ell(\Z_p)) = I_\ell  + \pp \Mat_\ell(\Z_p) \leq \GL_\ell(\Z_p)$$
(compare with \cite[Chapter II, \S \ 8, Proposition 4]{Bou}).

 Since $\L(G)$ is a $\Z_p$-lattice of dimension $d,$ by virtue of Ado's Theorem for principal ideal domains (see \cite[Theorem 1.1]{A4}), there exists a faithful Lie algebra representation $\phi \colon \L(G) \rightarrow \Mat_{\ell}(\Z_p)$ where $\ell \leq f(d)$ for some function $f \colon \N \rightarrow \N.$ 

On the one hand, $\mathcal{E}$ is a categorical isomorphism so 
$$\E\left(\phi \right) \colon \E\left(\pp\L(G) \right) \hookrightarrow \E(\pp \Mat_\ell(\Z_p)) \leq \GL_\ell(\Z_p)$$
is a group monomorphim. On the other hand, by \cite[Proposition 4.31(iii)]{DDMS}, the additive cosets --as algebras-- coincide with the multiplicative cosets --as groups--, and so in particular,
$$\left| G : \E\left(\pp\L(G)\right) \right| = \left| \L(G) : \pp \L(G) \right| = \left|\Z_p^d : \pp \Z_p^d \right| = \pp^d.$$
Therefore, considering the induced representation, there exists a group monomorphism 
$$m: G \hookrightarrow \GL_{n}(\Z_p),$$
where $n \leq  \gamma(p, d)$ for $\gamma(p, d) = \pp^d f(d).$
\end{proof}

Although the proof is essentially that of Lazard \cite{La68}, the principal improvement comparing with \cite[Theorem 7.19]{DDMS} and \cite[Theorem C]{Wei} is the bound on the degree of the linear representation in terms of the dimension $d$ and the prime $p$.

The difference with the case of positive characteristic arises in this subsection, as a categorical isomorphism between standard groups and a suitable category of Lie lattices might not exist over pro-$p$ domains of positive characteristic.

\begin{question}
Let $G$ be an $\F_p[[t]]$-standard group of dimension $d$. Do they exist a field $K$ and a group monomorphism $m \colon G \hookrightarrow \GL_n(K),$ where $n$ is bounded only in terms of $d$ and $p$? 
\end{question}

In view of the techniques outlined in this paper, a positive answer to the question above would imply the linearity of compact $R$-analytic groups over a general pro-$p$ domain $R$ of positive characteristic.

%%%%%%%%%%%%%%%%%%%%%%%%

\subsection{Discrimination}
\label{discrimination}

If $A$ and $B$ are two instances of the same algebraic structure, then $A$ is said to be \emph{fully residually} $B$ or $A$ is \emph{discriminated} by $B$ --equivalently, $B$ discriminates $A$-- if for any finite subset $S \subseteq A,$ there exists a homomorphism $h \colon A \rightarrow B$ in the corresponding category such that the restriction $h|_S$ on the set $S$ is injective.

For instance, if $(R, \m)$ is a pro-$p$ domain and $\X$ an $m$-tuple of variables, then $R[[\X]]$ is discriminated by $R.$ Indeed, for any finite subset $S \subseteq R[[\X]]$ there exists $a \in \m^{m}$ such that the continuous \emph{evaluation epimorphism} $s_a \colon R[[\X]] \rightarrow R,$ $F(\X) \mapsto F(a)$ is injective in $S$ (see \cite[Lemma 9]{Ja06}).  Drawing from this idea we have:

\begin{lemma}
\label{ring discrimination}
Let $R$ be a pro-$p$ domain. For each finite subset $S \subseteq R,$ there exists a pro-$p$ domain $Q_S$ of Krull dimension one and a continuous ring epimorphism $\pi \colon R \rightarrow Q_S$ such that $\pi|_S$ is injective.
\end{lemma}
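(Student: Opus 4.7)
The plan is to induct on $d = \Kdim R$, exhibiting $Q_S$ as the quotient of $R$ by a suitable prime ideal. Let $T = \{s - s' : s, s' \in S, \ s \neq s'\}$, a finite subset of $R \setminus \{0\}$. Since the condition that $\pi|_S$ be injective is equivalent to $\ker(\pi) \cap T = \emptyset$, it suffices to construct a prime ideal $\p$ of $R$ with $\Kdim R/\p = 1$ and $T \cap \p = \emptyset$, and then take $\pi$ to be the canonical projection $R \twoheadrightarrow R/\p$. Note that for any prime ideal $\p$ of $R$, the quotient $R/\p$ is automatically a pro-$p$ domain: $\p$ is closed in the $\m$-adic topology by Krull's intersection theorem, so $R/\p$ is complete for the induced $\m/\p$-adic topology; it is local and Noetherian with residue field $R/\m$ (finite of characteristic $p$), and it is a domain since $\p$ is prime. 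Moreover, $R$ is catenary (being a quotient of a regular local ring by Cohen's structure theorem), so $\Kdim R/\p$ equals the coheight of $\p$.

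The base case $d = 1$ is immediate on taking $\p = 0$. For $d \geq 2$, the strategy is to locate a height-one prime $\q$ of $R$ that avoids $T$ entirely. Once such a $\q$ is in hand, $R/\q$ is a pro-$p$ domain of Krull dimension $d - 1$ by the previous paragraph, the image of $S$ in $R/\q$ retains its pairwise distinctions, and invoking the inductive hypothesis for $R/\q$ with this image furnishes a further quotient $Q_S$ of Krull dimension one; composing the two projections gives the desired $\pi$.

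The heart of the argument is thus producing this height-one prime. For each $t \in T$, only finitely many height-one primes contain $t$, namely the minimal primes over $(t)$ (a height-one prime containing $t$ must equal some such minimal prime, which has height one by Krull's Hauptidealsatz). It remains to check that $R$ admits infinitely many height-one primes whenever $\Kdim R \geq 2$, so that the primes to avoid form a proper finite subset: if $\q_1, \dots, \q_s$ were the complete list, then $\m$ (being of height $\geq 2$) would be contained in none of them, whence prime avoidance would yield $x \in \m \setminus \bigcup_j \q_j$; any minimal prime over $(x)$ would then have height one, contain $x$, and therefore equal some $\q_j$, a contradiction. Consequently, the only genuine obstacle is to assemble these standard commutative-algebra ingredients (Krull's Hauptidealsatz, prime avoidance, catenarity, and closedness of ideals in complete Noetherian local rings) into a clean induction; no deeper input is required.
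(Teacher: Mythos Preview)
Your proof is correct, and it takes a genuinely different route from the paper's. The paper works in one step: it invokes Cohen's Structure Theorem to exhibit $R$ as a finite integral extension of $P[[t_1,\dots,t_m]]$ with $P\in\{\Z_p,\F_p[[t]]\}$, then uses the evaluation epimorphisms $s_a\colon P[[\t]]\to P$ (for $a\in\m^m$) together with Going Up to produce primes $\q_a\subseteq R$ with $R/\q_a$ finite integral over $P$; the fact that $\bigcap_a\ker\tilde s_a=\{0\}$ (via \cite[Lemma~9]{Ja06}) then lets one choose $a$ separating $S$. Your argument instead descends one dimension at a time by picking a height-one prime avoiding the finite set $T=\{s-s':s\neq s'\}$, relying only on Krull's Hauptidealsatz, prime avoidance, and catenarity of complete local rings; this is more self-contained and avoids the external lemma on evaluation kernels.

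The trade-off is that the paper's construction yields more than the bare lemma: because each $\q_a$ contracts to $\ker s_a=(t_1-a_1,\dots,t_m-a_m)$, the quotient $Q_a$ is generated over $\Z_p$ (in characteristic zero) by the images of a fixed generating set of $R$ over $\Z_p[[\t]]$, giving a uniform bound $\mu(R)$ on the $\Z_p$-rank of $Q_S$ that is \emph{independent of $S$}. This uniformity is exactly what feeds into Proposition~\ref{discriminated} to produce a single $n$ with $G$ discriminated by $\GL_n(\Z_p)$. Your inductive construction does not control which coheight-one prime $\p$ arises, and in general $R/\p$ can have arbitrarily large $\Z_p$-rank (already for $R=\Z_p[[t]]$, the primes $(t^n-p)$ give quotients of rank $n$), so while your proof establishes the lemma as stated, it would need supplementing to recover the uniform bound the paper uses downstream.
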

\begin{proof}
Let $S= \{ r_i\}_{i \in I} \subseteq R$ be a finite set of distinct elements, and define $r = \prod_{i \neq j} (r_i - r_j) \in R.$ Notice that $r\ne 0$ since the $r_i$ are distinct and $R$ is a domain.
  
According to Cohen's Structure Theorem \cite{Cohen}, $R$ is a finite integral extension of $P[[t_1, \dots, t_m]]$ where $m = \Kdim{R} -1,$ and $P$ is either $\Z_p$ or $\F_p[[t]]$ depending on the characteristic of $R$. 

For each $a \in \m^m,$ let $s_a \colon P[[\t]] \rightarrow P,$ $F(\t) \mapsto F(a)$ be the evaluation epimorphism and $\p_a = \ker{s_a}.$ By virtue of the  Going Up Theorem (see \cite[Theorem V.2.3]{ZaSa}), there exists a prime ideal $\q_a \subseteq R$ such that $\q_a \cap P[[\t]] = \p_a.$ Then $Q_a = \sfrac{R}{\q_a}$ is a pro-$p$ domain which is a finite integral extension of $P$ (see \cite[Remark to Lemma 4.3]{A2}), and the quotient map $\tilde{s}_a \colon R \rightarrow \sfrac{R}{\q_a} = Q_a$ extends $s_a.$ That is, the following diagram commutes with respect to the identification  $\iota: P \cong \sfrac{P[[t]]}{\p_a} \hookrightarrow Q_a = \sfrac{R}{\q_a},$ $x + \p_a \mapsto x + \q_a:$

$$\begin{tikzcd}
R \arrow[r, "\tilde{s}_a"]                 & Q_a                      \\
P[[t_1, \dots, t_m]] \arrow[r, "s_a"] \arrow[u, hook] & P \arrow[u, hook, "\iota"] 
\end{tikzcd}$$

Since $Q_a$ is an integral extension of $P,$ then $\Kdim{Q_a} =1.$ 

By \cite[Lemma 9]{Ja06}, we have that $\tcap_{a \in \m^m} \ker{s_a} = \{0\},$ and since $R$ is an integral extension of $P[[\t]],$ it follows from \cite[Complement 1 to Theorem V.2.3]{ZaSa} that $\tcap_{a \in \m^m} \ker{ \tilde{s}_a} = \{0\}.$ Hence, there exists $a \in \m^m$ such that $\tilde{s}_a(r) \neq 0,$ and thus $\tilde{s}_a(r_i) \neq \tilde{s}_a(r_j)$ for all $i \neq j \in I.$ 
\end{proof}

\begin{remark}
\label{remark}
As mentioned in the proof of the preceding theorem, if $R$ has characteristic zero, it is a finite integral extension of $\Z_p[[t_1, \dots, t_m]],$ where $m +1 = \kdim{R}.$  Let $\mu(R)$ be the minimum number of elements that generate $R$ as a $\Z_p[[t_1, \dots, t_m]]$-module. By construction $Q_a$ is generated as $\Z_p$-module by the image of the generators of $R$ as $\Z_p[[t_1, \dots, t_m]]$-module. Hence, as $\Z_p$ is a principal ideal domain, $Q_a$ is a free $\Z_p$-module of rank at most $\mu(R).$  
\end{remark}

\begin{proposition}
\label{discriminated}
Let $R$ be a pro-$p$ domain of characteristic zero, and let $G$ be an $R$-standard group. There exists an integer $n \in \N,$ depending on $R,$ the dimension of $G$ and the level of $G,$ such that $G$ is discriminated by $\GL_{n}(\Z_p).$
\end{proposition}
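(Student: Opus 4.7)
Fix a finite set $S = \{g_1, \dots, g_\ell\} \subseteq G$ of distinct elements. The plan is to produce a homomorphism $h_S \colon G \to \GL_n(\Z_p)$ injective on $S$, with $n$ depending only on $R$, the dimension $d$ of $G$ and the level $N$ of $G$. Let $\phi \colon G \to (\m^{*N})^d$ and $\FL = (F_1, \dots, F_d)$ be the chart and formal group law of $G$, and form the finite set $T \subseteq R$ of all nonzero coordinates of the differences $\phi(g_i) - \phi(g_j)$ for $i \neq j$. Applying Lemma \ref{ring discrimination} to $T$ produces a pro-$p$ domain $Q$ of Krull dimension $1$ and a continuous ring epimorphism $\pi \colon R \to Q$ that does not vanish on any element of $T$. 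Since $\charac(R) = 0$, also $\charac(Q) = 0$, and by Remark \ref{remark} the ring $Q$ is a free $\Z_p$-module of some rank $r \leq \mu(R)$.

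Applying $\pi$ coefficient by coefficient to $\FL$ yields a formal group law $\pi_*\FL$ over $Q$, turning the set $G' := (\n^{*N})^d$ (where $\n := \pi(\m)$ is the maximal ideal of $Q$) into a $Q$-standard group of dimension $d$ and level $N$. The coordinatewise map $h_1 \colon G \to G'$ induced by $\pi^d \circ \phi$ is a continuous surjective group homomorphism, and by the choice of $T$, for each pair $i \neq j$ at least one coordinate of $\phi(g_i) - \phi(g_j)$ lies in $T$ and hence survives under $\pi$, so $h_1|_S$ is injective. Restricting scalars along $\Z_p \hookrightarrow Q$, the group $G'$ becomes a compact $p$-adic analytic group of dimension $dr \leq d\mu(R)$.

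The remaining task is to embed $G'$ continuously into some $\GL_n(\Z_p)$ with $n$ controlled by $R, d, N$ alone. For each $M \geq N$, the subset $(\n^{*M})^d$ is a $Q$-standard subgroup of $G'$ of level $M$. Because $Q$ is a $\Z_p$-order of rank $r \leq \mu(R)$, its absolute ramification index over $\Z_p$ is bounded by $\mu(R)$, and direct commutator and $p$-th power estimates in the formal group law (commutators land in $(\n^{*2M})^d$, while $p$-th powers land in $(\n^{*(M+e)})^d$) exhibit an explicit threshold $M_0 = M_0(p, \mu(R), N)$ such that $U := (\n^{*M_0})^d$ is a uniform pro-$p$ group as a $\Z_p$-standard group. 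Theorem \ref{dim1linear} then provides a continuous embedding $U \hookrightarrow \GL_m(\Z_p)$ with $m \leq \gamma(p, dr)$, and inducing this faithful representation up along the finite-index inclusion $U \leq G'$ produces a continuous faithful $h_2 \colon G' \hookrightarrow \GL_n(\Z_p)$ with $n = m \cdot [G' : U]$, where $[G' : U] = |\n^{*N}/\n^{*M_0}|^d$ is likewise controlled by $R, d, N$. The composition $h_S := h_2 \circ h_1$ is then the required homomorphism.

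The principal technical obstacle will be the last step, namely verifying that both the uniformising level $M_0$ and the resulting index $[G' : U]$ depend only on $p$, $R$, $d$ and $N$, and not on the auxiliary ring $Q = Q_S$, which a priori varies with the finite set $S$. This amounts to a careful comparison of the $\n$-adic and $p$-adic filtrations of $Q$, which is made tractable by the fact that $Q$ is a $\Z_p$-order in a finite extension of $\Q_p$ of degree at most $\mu(R)$.
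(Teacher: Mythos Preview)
Your proposal follows the same architecture as the paper's proof: push the formal group law along a continuous surjection $\pi\colon R\to Q$ onto a Krull-dimension-one pro-$p$ domain (Lemma~\ref{ring discrimination}), obtain a $Q$-standard group $G'=(\n^{*N})^d$ together with a homomorphism $G\to G'$ injective on $S$, restrict scalars so that $G'$ becomes $p$-adic analytic of dimension at most $d\,\mu(R)$, locate a uniform open subgroup $U\le G'$ of bounded index, apply Theorem~\ref{dim1linear} to $U$, and induce. The only substantive difference is in how $U$ is produced and how $[G':U]$ is bounded.

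The paper bypasses your ``principal technical obstacle'' entirely. Instead of descending the $\n$-adic filtration and invoking ramification estimates to find a level $M_0$ at which $(\n^{*M_0})^d$ becomes uniform, it exploits the $\Z_p$-module isomorphism $\sigma\colon Q\to\Z_p^{\mu'}$ directly: since $p\in\n$ one has $p^N Q\subseteq \n^{*N}$, so
\[
U:=(\sigma^d)^{-1}\bigl((p^N\Z_p)^{\mu' d}\bigr)\subseteq (\n^{*N})^d=G'
\]
is visibly a $\Z_p$-standard group of level $N$, hence uniform, and the index bound $[G':U]\le |\Z_p^{\mu' d}:p^N\Z_p^{\mu' d}|=p^{N\mu' d}\le p^{N\mu(R)d}$ falls out immediately. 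No comparison of the $\n$-adic and $p$-adic filtrations on $Q$ is needed, and nothing depends on $Q$ beyond its $\Z_p$-rank. Your route can be made to work, but note that the sublattice $\n^{*M_0}\subseteq Q\cong\Z_p^{\mu'}$ is in general not of the form $p^L\Z_p^{\mu'}$, so to get a $\Z_p$-standard chart you would likely pass to $(p^{M_0}Q)^d$ inside it anyway, which is the paper's move with a worse constant. A minor point: your use of Lemma~\ref{ring discrimination} asks for $\pi$ to be nonvanishing on $T$ rather than injective; this is immediate once you apply the lemma to $T\cup\{0\}$.
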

\begin{proof}
Since $G$ is an $R$-standard group of dimension $d$ and level $N,$ it can be identified with $\left(  \m^{*N}\right)^{d}$ such that the group operation is given by a formal group law with components
$$F_i(\X,\Y) = \sum_{\alpha, \beta \in  \N_0^d} a_{\alpha, \beta}^i \X^\alpha \Y^{\beta} \in R[[\X, \Y]]$$
(here $\X$ and $\Y$ denote $d$-tuples of variables).
Let $S \subseteq \left(  \m^{*N}\right)^{d}$ be a finite set of $d$-tuples, i.e.
$$S = \left\{ (r_{i1}, \dots, r_{id}) \in R^{d}\right\}_{i \in I},$$
and let $S' = \{ r_{ij}\}_{\substack{i \in I \\ j = 1, \dots, d}} \subseteq R.$  By Lemma \ref{ring discrimination}, there exists a pro-$p$ domain $Q$ of Krull dimension one with maximal ideal $\n$, and a continuous ring epimorphism $\pi \colon R \rightarrow Q$ that is injective when restricted to $S'.$ 

Let $H = \left( \n^{*N} \right)^{d}$ with the natural $Q$-analytic manifold structure. Then it can be endowed with a group structure, where the group operation is given by the $d$-dimensional formal group law with components
$$\tilde{F}_i(\X, \Y) =   \sum_{\alpha, \beta \in \N_0^d} \pi(a_{\alpha, \beta}^i) \X^\alpha \Y^{\beta} \in Q[[\X, \Y]]$$
(see \cite[Corollary 3.2]{A2}). Then $\pi^{d} \colon G \rightarrow H,$ $(r_1, \dots, r_d) \mapsto (\pi(r_1), \dots, \pi(r_d))$ is a group epimorphism that is injective when restricted to $S.$ 

Moreover, by Remark \ref{remark}, $Q$ is a free $\Z_p$-module of rank $\mu' \leq \mu(R)$, and thus, by restriction of scalars (see \cite[Examples 13.6(iv)]{DDMS}), $H$ is a $p$-adic analytic group of dimension $\mu'd.$ More precisely, if $\sigma \colon Q \rightarrow \Z_p^{\mu'}$ is a $\Z_p$-module isomorphism, then $p^{N} \Z_p^{\mu'} \subseteq \sigma\left(\m^{*N}\right).$ Thus, $H$ contains a uniform pro-$p$ group  of level $N,$ say
$$U := \left(\sigma^d \right)^{-1}\left( p^N \Z_p^{\mu' d} \right).$$

According to Theorem \ref{dim1linear}, there exists a faithful linear representation $m_1 \colon U \hookrightarrow \GL_{\gamma(p, \ \mu(R)d)}(\Z_p),$ for a function $\gamma \colon \N^2 \rightarrow \N.$ 

Furthermore, in view of \cite[Proposition 4.31 (iii)]{DDMS}, the subgroup indexes coincide with the indexes as additive groups, so
$$|H : U| \leq \left|\Z_p^{\mu' d} :  p^{N} \Z_p^{\mu' d} \right| = p^{N \mu' d} \leq p^{N \mu(R) d}.$$ 

 Let 
$$n = p^{N \mu(R) d}\gamma(p, \mu(R)d) \in \N $$ and let $m\colon H \hookrightarrow  \GL_{n}(\Z_p)$ be the linear representation induced from $m_1$. In particular, $m$ is a group monomorphism.
Lastly, $m \circ \pi^{d} \colon G \rightarrow \GL_{n}(\Z_p)$ is a group homomorphism that is injective when restricted to $S.$ 
\end{proof}

In other words, we have the following

\begin{corollary}\label{cor:disc}
Let $R$ be a pro-$p$ domain of characteristic $0$. Every $R$-standard group is discriminated by a linear group.
\end{corollary}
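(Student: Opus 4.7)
The plan is straightforward: this corollary is a qualitative weakening of Proposition \ref{discriminated}, which has already been established just above. That proposition produces, for any $R$-standard group $G$ with $R$ a pro-$p$ domain of characteristic zero, an explicit integer $n$ (depending on $R$, the dimension of $G$, and the level of $G$) such that for every finite $S \subseteq G$ there is a group homomorphism $G \to \GL_n(\Z_p)$ injective on $S$. In other words, $G$ is discriminated by $\GL_n(\Z_p)$.

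To deduce the corollary, I would simply observe that $\GL_n(\Z_p)$ is itself a linear group in the usual sense: via the inclusion $\Z_p \hookrightarrow \Q_p$ into the field of fractions, one has $\GL_n(\Z_p) \leq \GL_n(\Q_p)$, so $\GL_n(\Z_p)$ is a subgroup of the general linear group over a field. Consequently, a group discriminated by $\GL_n(\Z_p)$ is \emph{a fortiori} discriminated by a linear group, and forgetting the explicit value of $n$ together with the specific ring $\Z_p$ yields the qualitative statement of the corollary.

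No genuine obstacle arises; the corollary is essentially a repackaging of Proposition \ref{discriminated}, stated in a form tailored for the subsequent model-theoretic argument. The only point worth flagging is that the discriminating homomorphisms are abstract group homomorphisms (continuity is neither asserted nor needed), which is exactly the hypothesis required to pass, via a standard ultraproduct construction, from discrimination by $\GL_n(\Z_p)$ to an embedding of $G$ into $\GL_n(\Z_p^\U)$ and thereby conclude linearity of $G$ in the proof of Theorem \ref{main}.
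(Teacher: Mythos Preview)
Your proposal is correct and matches the paper's approach: the paper introduces the corollary with the phrase ``In other words, we have the following,'' i.e.\ it treats it as an immediate restatement of Proposition~\ref{discriminated}, exactly as you do. Your additional remarks about $\GL_n(\Z_p)\le\GL_n(\Q_p)$ and the irrelevance of continuity are accurate but more than the paper bothers to spell out.
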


%%%%%%%%%%%%%%%%%%%%%%%

\subsection{Embedding into ultrapowers}
\label{subsec: model theory}

In order to conclude the proof we use some basics of Model Theory. We refer the reader to \cite{CK} for definitions and background. The main result is a corollary of the following theorem.

\begin{theorem}
\label{embeding}
Let $R$ be a pro-p domain of characteristic $0$. Let $G$ be an $R$-standard group, then $G$ embeds in an ultrapower of $\GL_n(\Z_p)$ for a suitable $n \in \N.$
\end{theorem}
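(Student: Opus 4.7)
The plan is to combine Corollary \ref{cor:disc} with a standard model-theoretic construction that turns a discriminating family of homomorphisms into an embedding into an ultrapower. First, apply Corollary \ref{cor:disc} to obtain an integer $n \in \N$ (depending on $R$, the dimension of $G$, and the level of $G$) such that $G$ is discriminated by $H := \GL_n(\Z_p)$; that is, for every finite subset $S \subseteq G$ there exists a group homomorphism $h_S \colon G \to H$ whose restriction to $S$ is injective.

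Next, take as index set $I$ the collection of all finite subsets of $G$, directed by inclusion. For each $g \in G$, set
\[
I_g := \{ S \in I : g \in S \}.
\]
The family $\{ I_g : g \in G \}$ has the finite intersection property, since $I_{g_1} \cap \dots \cap I_{g_k}$ contains the finite set $\{g_1, \dots, g_k\}$. Therefore it extends to an ultrafilter $\U$ on $I$ with $I_g \in \U$ for every $g \in G$.

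Now define
\[
\Phi \colon G \longrightarrow H^{I}/\U, \qquad \Phi(g) := \bigl[ (h_S(g))_{S \in I} \bigr]_\U.
\]
Since each $h_S$ is a group homomorphism and the group operation in $H^{I}/\U$ is defined coordinatewise modulo $\U$, $\Phi$ is a group homomorphism. To show injectivity it suffices to check that $\Phi(g) \neq 1$ whenever $g \neq 1$. Given such a $g$, consider the set
\[
J_g := \{ S \in I : h_S(g) \neq 1_H \}.
\]
If $S \in I$ contains both $g$ and $1_G$, then $h_S$ is injective on $\{1_G, g\}$ and hence $h_S(g) \neq h_S(1_G) = 1_H$; in particular $I_g \cap I_{1_G} \subseteq J_g$, and since both $I_g, I_{1_G} \in \U$, we conclude $J_g \in \U$. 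Thus $\Phi(g) \neq 1$, and $\Phi$ is an embedding of $G$ into the ultrapower $H^{I}/\U$, as required.

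This argument is entirely routine once Corollary \ref{cor:disc} is in hand; the only mild subtlety is guaranteeing the existence of an ultrafilter $\U$ capturing the discriminating property for every element simultaneously, which is precisely why we enlarge the Fr\'echet-style filter generated by the $I_g$ to an ultrafilter. As noted in the introduction, the ultrapower $\GL_n(\Z_p)^I/\U$ is naturally isomorphic to $\GL_n(\Z_p^{\U})$, so this at once yields the linearity statement of Theorem \ref{main}.
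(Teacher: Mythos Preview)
Your proof is correct and follows essentially the same route as the paper: both turn the discrimination of $G$ by $\GL_n(\Z_p)$ into an embedding into an ultrapower via the standard filter-of-cofinite-sets construction. The only cosmetic differences are that the paper indexes by finite subsets of $\mathrm{Diag}(G)$ and appeals to \strokeL o\'s' Theorem, whereas you index by finite subsets of $G$ and verify injectivity by hand; also, the precise statement giving the uniform $n$ is Proposition~\ref{discriminated} rather than Corollary~\ref{cor:disc}.
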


Throughout the rest of the paper we write $A^\U$ for the ultrapower of an algebraic structure $A$ with respect to a non-principal ultrafilter $\U$ in $I,$ that is, $A^\U = \sfrac{\prod_{i \in I} A}{\U}.$
\begin{remark}
\label{existentially closed}
In the first-order language of groups, an \emph{existential sentence} is a sentence that has a prenex normal form of the type
$$\exists x_1, \dots, x_n \ \bigvee_{k\in K} \left(\bigwedge_{i \in I} u_i(x_1, \dots, x_n) = 1 \ \bigwedge_{j \in J} v_j(x_1, \dots, x_n) \neq 1\right),$$
where $u_i$ and $v_j$ are group words and the sets $I,J$ and $K$ are finite. Therefore, by Proposition \ref{discriminated}, for each existential sentence $\varphi$ satisfied by $G,$ there exists a group homomorphism $h \colon G \rightarrow \GL_n(\Z_p)$ such that $h(G)$ satisfies $\varphi.$ In particular, $\GL_n(\Z_p)$ satisfies the existential theory of $G.$
\end{remark}

The theorem follows from standard results in logic and it is well-known, but we include it for the sake of completeness. Indeed, models of the existential theory of $G$ are substructures of an ultrapower of $G$. More specifically the proof is based on Compactness Theorem and on Frayne's Theorem  (cf. \cite[Theorems 1.3.22 and 4.3.12]{CK}), although the latter's full strength is not used.

\begin{proof}[Proof of Theorem \ref{embeding}]
Let $\L$ be the first-order language of groups and let $\L_G$ be the expansion of $\L$ where we add a new constant for each element in $G,$ and let $(\mathfrak{G}, g)_{g \in G}$ be the model $G$ for $\L_G,$ with the obvious interpretation for the new constants. Let $\mathrm{Diag}(G)$ be the diagram of $(\mathfrak{G}, g)_{g \in G}$ (the collection of atomic sentences and negations of atomic sentences with parameters in $G$ that satisfies $G$), and let $I$ be the collection of finite subsets of $\textrm{Diag}(G).$ If $i = \{ \varphi_1, \dots, \varphi_n\} \in I,$ then $ \varphi = \varphi_1 \wedge \dots \wedge \varphi_n$ is a quantifier-free formula satisfied by $(\mathfrak{G}, g)_{g \in G}$ so, by Remark \ref{existentially closed}, there exists a group homomorphism $h_i \colon G \rightarrow H_i= \GL_n(\Z_p)$ such that $h_i(G)$ satisfies $\varphi.$  For each $i \in I$ set $A_i = \{ j \in I \mid j \supseteq i \},$ then $\{A_i \mid i \in I\}$ generates a proper filter, so we take a non-principal ultrafilter $\U$ containing it. Finally, let 
$$H := \sfrac{\prod_{i \in I} H_i}{\U} = \sfrac{\prod_{i \in I} \GL_n(\Z_p)}{\U}  = \GL_n(\Z_p)^\U,$$
and the injection $m \colon G \hookrightarrow H$ such that 
$$m(g) = \sfrac{(h_i(g))_{i \in I}}{\U} \in H,$$
which is a group monomorphism by \strokeL o\'s' Theorem (cf. \cite[Theorem 4.1.9]{CK}).
\end{proof}

We can deduce the main theorem from the preceding results. 

\begin{proof}[Proof of Theorem \ref{main}]
We can assume, without loss of generality, that $G$ is $R$-standard. According to Theorem \ref{embeding}, $G$ is a subgroup of the ultrapower $\GL_n(\Z_p)^\U.$ Moreover, 
$$\GL_{n}(\Z_p)^\U \cong \GL_{n}\left(\Z_p^\U \right) \leq \GL_{n}\left(\Q_p^\U \right)$$
(see \cite[1.L.6]{KeWe}). Thus, since $\Q_p^\U$ is a field, $G$ is linear.
\end{proof}

%%%%%%%%%%%%%%%%%%%%%%%%

\end{document}